\newtheorem{theorem}{Theorem}
\newtheorem{corollary}[theorem]{Corollary}
\newtheorem{definition}[theorem]{Definition}
\newtheorem{lemma}[theorem]{Lemma}
\newtheorem{problem}[theorem]{Problem}
\newtheorem{proposition}[theorem]{Proposition}
\newtheorem{remark}[theorem]{Remark}
\newenvironment{proof}[1][Proof]{\noindent\textbf{#1.} }{\ \rule{0.5em}{0.5em}}
\begin{document}

\author{
	Tomer Kotek\\
	Department of Computer Science\\
	Technion - Israel Institute of Technology\\
	Haifa, Israel
	\and 
	James Preen\\
	Mathematics\\
	Cape Breton University\\
	Sydney, Canada
	\and Peter Tittmann\\
	Faculty Mathematics, Sciences, and Computer Science\\
	Hochschule Mittweida - University of Applied Sciences\\
	Mittweida, Germany
	}
	
\title{Subset-Sum Representations of Domination Polynomials}
\maketitle

\begin{abstract}
The domination polynomial $D(G,x)$ is the ordinary generating function for the
dominating sets of an undirected graph $G=(V,E)$ with respect to their
cardinality. We consider in this paper representations of $D(G,x)$ as a sum
over subsets of the edge and vertex set of $G$. One of our main results is a
representation of $D(G,x)$ as a sum ranging over spanning bipartite subgraphs
of $G$.

Let $d(G)$ be the number of dominating sets of $G$. We call a graph $G$
\emph{conformal} if all of its components are of even order. Let
$\mathrm{Con}(G)$ be the set of all vertex-induced conformal subgraphs of $G$
and let $k(G)$ be the number of components of $G$. We show that%
\[
d(G)=\sum_{H\in\mathrm{Con}(G)}2^{k(H)}.
\]

\end{abstract}

\section{Introduction}

Let $G=(V,E)$ be an undirected graph. All graphs considered in this paper are
assumed to be finite and simple. The \emph{closed neighborhood} $N_{G}\left[
v\right]  $ of a vertex $v\in V$ is the set consisting of $v$ and all its
neighbor vertices in $G$. For any subset $W\subseteq V$, we denote by
$N_{G}\left[  W\right]  $ the \emph{closed neighborhood} of $W$ in $G$, that is%

\[
N_{G}\left[  W\right]  =\bigcup\limits_{v\in W}N_{G}\left[  v\right]  .
\]

If the graph is clear from the context, then we write $N\left[  v\right]  $
and $N\left[  W\right]  $ instead of $N_{G}\left[  W\right]  $ and
$N_{G}\left[  v\right]  $, respectively. A \emph{dominating set} of $G$ is a
vertex subset $W\subseteq V$ such that $N\left[  W\right]  =V$. Let
$W\subseteq V$ be a given vertex subset of the graph $G=(V,E)$. We denote by
$\partial(W)$ the set of all edges of $G$ that have exactly one of their end
vertices in $W$, that is%

\[
\partial(W)=\left\{  \left\{  u,v\right\}  \in E\mid u\in W,v\in V\setminus
W\right\}  .
\]

The edges of $\partial(W)$ link vertices of $W$ with vertices of $V\setminus
W$. Whether a given set $W$ is a dominating set of $G$ depends neither on
edges lying completely inside $W$ nor on edges that have no end vertex in $W$,
which gives the following statement.

\begin{proposition}
\label{prop_delta}Let $G=(V,E)$ be a graph, $W\subseteq V$, and $F\subseteq E
$. Then $W$ is a dominating set of $\left(  V,F\right)  $ if and only $W$ is
dominating in $\left(  V,F\cap\partial(W)\right)  $, i.e.%

\[
N_{\left(  V,F\right)  }\left[  W\right]  =V\ \Longleftrightarrow\ N_{\left(
V,F\cap\partial(W)\right)  }\left[  W\right]  =V.
\]

\end{proposition}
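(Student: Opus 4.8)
The plan is to split the biconditional into its two implications and to observe that one direction is essentially free by monotonicity while the other uses the definition of $\partial(W)$ directly. I would first record the trivial containment $N_{(V,F\cap\partial(W))}[W]\subseteq N_{(V,F)}[W]$, which holds simply because $F\cap\partial(W)\subseteq F$ and the closed neighborhood operator is monotone in the edge set. This immediately gives the implication $N_{(V,F\cap\partial(W))}[W]=V\ \Rightarrow\ N_{(V,F)}[W]=V$, so the ``if'' direction needs no further work.

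For the converse, I would argue pointwise. Assume $N_{(V,F)}[W]=V$ and pick an arbitrary $v\in V$; the goal is to show $v\in N_{(V,F\cap\partial(W))}[W]$. If $v\in W$ this is automatic, since $v\in N_{(V,H)}[W]$ for every edge set $H$. If $v\in V\setminus W$, then because $v$ is dominated by $W$ in $(V,F)$ there is an edge $e=\{u,v\}\in F$ with $u\in W$. The point is that such an $e$ has exactly one endpoint in $W$ (namely $u\in W$ and $v\notin W$), so $e\in\partial(W)$ and hence $e\in F\cap\partial(W)$; this witnesses $v\in N_{(V,F\cap\partial(W))}[W]$. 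Since $v$ was arbitrary, $N_{(V,F\cap\partial(W))}[W]=V$, completing the ``only if'' direction.

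There is no real obstacle here: the statement is a direct unpacking of the definitions, and the only thing to be careful about is handling the two cases $v\in W$ and $v\notin W$ separately, so that the edge produced in the second case is genuinely a ``boundary'' edge of $W$. I would present it compactly, perhaps even as a single chain of equivalences, emphasizing that adding or removing edges with both endpoints in $W$ or both endpoints outside $W$ cannot affect whether $W$ is dominating, which is the intuitive content the proposition formalizes and which will be used repeatedly in the sequel.
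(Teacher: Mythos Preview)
Your proof is correct and matches the paper's reasoning: the paper does not give a formal proof of this proposition but simply precedes it with the one-line observation that whether $W$ is dominating depends neither on edges inside $W$ nor on edges with no endpoint in $W$, which is exactly the content you unpack. Your case split on $v\in W$ versus $v\notin W$ is the natural way to make that observation rigorous.
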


\begin{definition}
Let $G=(V,E)$ be an undirected graph and $d_{k}(G)$ the number of dominating
sets of cardinality $k$ in $G$ for $k=0,...,n=\left\vert V\right\vert $. The
\emph{domination polynomial of }$G$ is%

\[
D(G,x)=\sum_{k=0}^{n}d_{k}(G)x^{k}.
\]

\end{definition}

We denote by $d(G)$ the number of dominating sets of $G$. Consequently, we
find $d(G)=D(G,1)$.

The domination polynomial of a graph has been introduced by Arocha and Llano
in \cite{Arocha}. More recently it has been investigated with respect to
special graphs, zeros, and applications in network reliability, see
\cite{AAOP,AP,AP2,AAP,DT}.

The domination polynomial can also be represented as a sum over vertex subsets
of $G$,%
\[
D(G,x)=\sum_{\substack{U\subseteq V \\N\left[  U\right]  =V}}x^{\left\vert
U\right\vert }.
\]

The domination polynomial is multiplicative with respect to components, see
\cite{Arocha}. Let $G_{1},...,G_{k}$ be the components of a given graph $G$,
then%
\begin{equation}
D(G,x)=%
{\displaystyle\prod\limits_{i=1}^{k}}
D(G_{i},x). \label{product}%
\end{equation}

\section{Spanning Subgraphs}

In this section, we provide a representation of the domination polynomial as a
sum ranging over all bipartite spanning subgraphs of a graph.

\subsection{Connected Bipartite Graphs}

Alternating sums of domination polynomials of spanning subgraphs of a given
graph yield a particularly simple result in case of connected bipartite graphs.

\begin{lemma}
\label{Lemma_bipart}Let $G=(V,E)$ be a connected bipartite graph with
bipartition $V=Y\cup Z$, $Y\neq\emptyset$, $Z\neq\emptyset$. Then%
\[
\sum_{F\subseteq E}(-1)^{\left\vert F\right\vert }D\left(  \left(  V,F\right)
,x\right)  =(-1)^{\left\vert Y\right\vert }x^{\left\vert Z\right\vert
}+(-1)^{\left\vert Z\right\vert }x^{\left\vert Y\right\vert }.
\]

\end{lemma}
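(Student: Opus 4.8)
The plan is to compute the sum $\sum_{F\subseteq E}(-1)^{|F|}D((V,F),x)$ by expanding $D((V,F),x)$ as a sum over vertex subsets $U$ and switching the order of summation. Writing $D((V,F),x)=\sum_{U\subseteq V,\, N_{(V,F)}[U]=V}x^{|U|}$, we get
\[
\sum_{F\subseteq E}(-1)^{|F|}D((V,F),x)=\sum_{U\subseteq V}x^{|U|}\sum_{\substack{F\subseteq E\\ N_{(V,F)}[U]=V}}(-1)^{|F|}.
\]
So for each fixed $U$ I must evaluate the inner sum $c(U):=\sum_{F}(-1)^{|F|}$ over those $F\subseteq E$ for which $U$ dominates $(V,F)$. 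By Proposition~\ref{prop_delta}, whether $U$ dominates $(V,F)$ depends only on $F\cap\partial(U)$; the edges of $E\setminus\partial(U)$ are free, and they pair up by inclusion/exclusion. Concretely, splitting $F=F_0\cup F_1$ with $F_0\subseteq\partial(U)$ and $F_1\subseteq E\setminus\partial(U)$, the factor $\sum_{F_1\subseteq E\setminus\partial(U)}(-1)^{|F_1|}$ is $0$ unless $E\setminus\partial(U)=\emptyset$. Hence $c(U)=0$ unless $\partial(U)=E$, i.e. unless every edge of $G$ has exactly one endpoint in $U$ — which, since $G$ is connected and bipartite with parts $Y,Z$, forces $U\in\{Y,Z\}$ (this uses connectedness: $\partial(U)=E$ means $U$ and $V\setminus U$ form an independent-set bipartition, and a connected bipartite graph has a unique bipartition).

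It remains to handle the two surviving terms $U=Y$ and $U=Z$. For $U=Y$ we have $\partial(Y)=E$, so the condition ``$Y$ dominates $(V,F)$'' must be analyzed for $F\subseteq E$. Here I expect to argue that $Y$ dominates $(V,F)$ iff every vertex of $Z$ is matched by some edge of $F$ to $Y$ (vertices of $Y$ dominate themselves), i.e. iff $F$ covers $Z$; and then $\sum_{F\subseteq E,\, F \text{ covers } Z}(-1)^{|F|}$ needs to be computed. The cleanest route is inclusion–exclusion over which vertices of $Z$ are left uncovered: a standard computation gives that this alternating sum equals $(-1)^{|E|-|Z|}\cdot(\text{something})$, but in fact the slick way is to note directly that $\sum_{F}(-1)^{|F|}[Y\text{ dominates }(V,F)]$, by the same Proposition~\ref{prop_delta}-style pairing applied vertex-by-vertex in $Z$: for each $z\in Z$ the edges at $z$ can be toggled, and the alternating sum over all choices that keep at least one such edge is $(-1)^{\deg z}$ ... and multiplying over $z\in Z$ yields $(-1)^{|E|}=(-1)^{|E|}$. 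Wait — $\sum_{\emptyset\neq S\subseteq A}(-1)^{|S|}=-1$ regardless of $|A|$ when $A\neq\emptyset$, so each $z$ contributes a factor $-1$, giving $c(Y)=(-1)^{|Z|}$; symmetrically $c(Z)=(-1)^{|Y|}$. Substituting back, the total sum is $x^{|Y|}(-1)^{|Z|}+x^{|Z|}(-1)^{|Y|}$, as claimed.

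The main obstacle is the bookkeeping in the last step: correctly factoring the alternating edge-sum for a fixed $U$ into a product over vertices of $V\setminus U$ (so that the ``cover'' condition decouples), and making sure the degenerate cases — $|Z|=1$ or a vertex of $Z$ of degree $0$ (impossible here since $G$ is connected with $\geq 2$ vertices) — do not cause trouble. I would present the argument by first isolating $c(U)$ via Proposition~\ref{prop_delta} to kill all $U\notin\{Y,Z\}$, then doing the per-vertex factorization for $U=Y$ and $U=Z$; connectedness is used exactly once, to guarantee the bipartition is unique so that only two terms survive.
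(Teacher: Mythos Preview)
Your proposal is correct and follows essentially the same route as the paper's proof: expand $D((V,F),x)$ as a sum over vertex subsets, switch the order of summation, use Proposition~\ref{prop_delta} to split each $F$ into its $\partial(U)$-part and its complement so that all $U$ with $E\setminus\partial(U)\neq\emptyset$ drop out (leaving only $U=Y$ and $U=Z$ by uniqueness of the bipartition in a connected bipartite graph), and then factor the surviving alternating sums vertex-by-vertex over $Z$ (resp.\ $Y$) to obtain $(-1)^{|Z|}$ and $(-1)^{|Y|}$. The only cosmetic difference is that the paper phrases the case split as ``$W$ meets both parts'' versus $W\in\{Y,Z\}$, whereas you phrase it via the condition $\partial(U)=E$; these are equivalent here.
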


\begin{proof}
Let $W$ be a dominating set of $G$; then we can distinguish three cases, namely%

\begin{align*}
\text{(a)}  &  W\cap Y\neq\emptyset\text{ and }W\cap Z\neq\emptyset,\\
\text{(b)}  &  W=Y,\\
\text{(c)}  &  W=Z.
\end{align*}

We decompose the sum according to the above given cases:%

\begin{align}
\sum_{F\subseteq E}(-1)^{\left\vert F\right\vert }D\left(  \left(  V,F\right)
,x\right)   &  =\sum_{F\subseteq E}\sum_{\substack{W\subseteq V \\N_{\left(
V,F\right)  }\left[  W\right]  =V}}(-1)^{\left\vert F\right\vert
}x^{\left\vert W\right\vert }\nonumber\\
&  =\sum_{W\subseteq V}x^{\left\vert W\right\vert }\sum_{\substack{F\subseteq
E \\N_{\left(  V,F\right)  }\left[  W\right]  =V}}(-1)^{\left\vert
F\right\vert }\nonumber\\
&  =\sum_{\substack{W\subseteq V \\W\cap Y\neq\emptyset\\W\cap Z\neq\emptyset
}}x^{\left\vert W\right\vert }\sum_{\substack{F\subseteq E \\N_{\left(
V,F\right)  }\left[  W\right]  =V}}(-1)^{\left\vert F\right\vert }%
\tag{a}\label{sum1}\\
&  +x^{\left\vert Y\right\vert }\sum_{\substack{F\subseteq E \\N_{\left(
V,F\right)  }\left[  Y\right]  =V}}(-1)^{\left\vert F\right\vert }%
\tag{b}\label{sum2}\\
&  +x^{\left\vert Z\right\vert }\sum_{\substack{F\subseteq E \\N_{\left(
V,F\right)  }\left[  Z\right]  =V}}(-1)^{\left\vert F\right\vert }.
\tag{c}\label{sum3}%
\end{align}

We show that the Sum (\ref{sum1}) vanishes. According to Proposition
\ref{prop_delta}, a set $W$ is dominating in $(V,F)$ if and only if $W$ is a
dominating set of $(V,F\cap\partial(W))$. The evaluation of the Sum
(\ref{sum1}) yields%

\begin{align*}
\sum_{\substack{W\subseteq V \\W\cap Y\neq\emptyset\\W\cap Z\neq\emptyset
}}x^{\left\vert W\right\vert }\sum_{\substack{F\subseteq E \\N_{\left(
V,F\right)  }\left[  W\right]  =V}}(-1)^{\left\vert F\right\vert }  &
=\sum_{\substack{W\subseteq V \\W\cap Y\neq\emptyset\\W\cap Z\neq\emptyset
}}x^{\left\vert W\right\vert }\sum_{\substack{F_{1}\subseteq E\cap\partial(W)
\\F_{2}\subseteq E\setminus\partial(W) \\N_{\left(  V,F_{1}\right)  }\left[
W\right]  =V}}(-1)^{\left\vert F_{1}\cup F_{2}\right\vert }\\
&  =\sum_{\substack{W\subseteq V \\W\cap Y\neq\emptyset\\W\cap Z\neq\emptyset
}}x^{\left\vert W\right\vert }\sum_{\substack{F_{1}\subseteq E\cap\partial(W)
\\N_{\left(  V,F_{1}\right)  }\left[  W\right]  =V}}(-1)^{\left\vert
F_{1}\right\vert }\sum_{F_{2}\subseteq E\setminus\partial(W)}(-1)^{\left\vert
F_{2}\right\vert }.
\end{align*}

Now assume that $E\setminus\partial(W)=\emptyset$. Let $y\in Y\cap W$ and
$z\in Z\cap W$. Then there does not exist a path between $y$ and $z$ in $G$.
This contradicts the assumed connectedness of $G$; hence $E\setminus
\partial(W)\neq\emptyset$, which gives%
\[
\sum_{F_{2}\subseteq E\setminus\partial(W)}(-1)^{\left\vert F_{2}\right\vert
}=(1-1)^{\left\vert E\setminus\partial(W)\right\vert }=0.
\]
Now we turn to the Sum (\ref{sum2}),%
\[
\sum_{\substack{F\subseteq E\\N_{\left(  V,F\right)  }\left[  Y\right]
=V}}(-1)^{\left\vert F\right\vert }.
\]
An edge subset $F\subseteq E$ satisfies the property \textquotedblleft$Y$ is
dominating in $\left(  V,F\right)  $\textquotedblright\ if and only if $F$
contains at least one edge from each vertex of $Z$. We denote the vertices of
$Z$ by $v_{1},...,v_{k}$. For each $i$, $i=1,...,k$, let $E_{i}$ be the set of
edges of $G$ that are incident to $v_{i}$. We define%
\[
\mathcal{F}=\left\{  A\subseteq E\mid\forall i=1,...,k:\left\vert E_{i}\cap
A\right\vert \geq1\right\}  .
\]
Now the Sum (\ref{sum2}) can be expressed as follows,%
\begin{align*}
\sum_{\substack{F\subseteq E\\N_{\left(  V,F\right)  }\left[  Y\right]
=V}}(-1)^{\left\vert F\right\vert }  &  =\sum_{F\in\mathcal{F}}%
(-1)^{\left\vert F\right\vert }\\
&  =\sum_{\substack{F_{1}\cup F_{2}\cup...\cup F_{k}\in\mathcal{F}\\\forall
i=1,...,k:F_{i}\subseteq E_{i}}}(-1)^{\left\vert F_{1}\cup F_{2}\cup...\cup
F_{k}\right\vert }\\
&  =\sum_{\forall i=1,...,k:\emptyset\neq F_{i}\subseteq E_{i}}%
(-1)^{\left\vert F_{1}\right\vert +\left\vert F_{2}\right\vert +...+\left\vert
F_{k}\right\vert }\\
&  =\sum_{\substack{F_{1}\subseteq E_{1}\\F_{1}\neq\emptyset}}(-1)^{\left\vert
F_{1}\right\vert }\sum_{\substack{F_{2}\subseteq E_{2}\\F_{2}\neq\emptyset
}}(-1)^{\left\vert F_{2}\right\vert }\cdot\cdot\cdot\sum_{\substack{F_{k}%
\subseteq E_{k}\\F_{k}\neq\emptyset}}(-1)^{\left\vert F_{k}\right\vert }\\
&  =(-1)^{k}=(-1)^{\left\vert Z\right\vert },
\end{align*}
which yields%
\[
x^{\left\vert Y\right\vert }\sum_{\substack{F\subseteq E\\N_{\left(
V,F\right)  }\left[  Y\right]  =V}}(-1)^{\left\vert F\right\vert
}=(-1)^{\left\vert Z\right\vert }x^{\left\vert Y\right\vert }.
\]
In the same vein, we can prove that the sum (c) satisfies%
\[
x^{\left\vert Z\right\vert }\sum_{\substack{F\subseteq E\\N_{\left(
V,F\right)  }\left[  Z\right]  =V}}(-1)^{\left\vert F\right\vert
}=(-1)^{\left\vert Y\right\vert }x^{\left\vert Z\right\vert }%
\]
and the statement follows.
\end{proof}

\subsection{General Bipartite Graphs}

\begin{lemma}
\label{lemma_bipart2}Let $G=(V,E)$ be a bipartite graph with bipartition
$V=Y\cup Z$. Assume that $G$ consists of $k+l$ components such that the $k$
components $G_{1}=(V_{1},E_{1}),...,G_{k}=(V_{k},E_{k})$ have nonempty edge
sets and the remaining $l$ components are isomorphic to $K_{1}$. Then
\[
\sum_{F\subseteq E}(-1)^{\left\vert F\right\vert }D\left(  \left(  V,F\right)
,x\right)  =x^{l}%
{\displaystyle\prod\limits_{i=1}^{k}}
\left[  (-1)^{\left\vert Y\cap V_{i}\right\vert }x^{\left\vert Z\cap
V_{i}\right\vert }+(-1)^{\left\vert Z\cap V_{i}\right\vert }x^{\left\vert
Y\cap V_{i}\right\vert }\right]  .
\]

\end{lemma}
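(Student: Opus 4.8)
The plan is to reduce the general statement to the connected case treated in Lemma~\ref{Lemma_bipart}, using multiplicativity of the domination polynomial over components, Equation~\eqref{product}.

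First I would note that $D(K_{1},x)=x$, since the single vertex of $K_{1}$ forms its only dominating set. Because the $l$ components isomorphic to $K_{1}$ carry no edges, the edge set $E$ is the disjoint union $E_{1}\cup\dots\cup E_{k}$, so the map $F\mapsto(F\cap E_{1},\dots,F\cap E_{k})$ is a bijection from the subsets of $E$ onto $2^{E_{1}}\times\dots\times 2^{E_{k}}$. For such an $F$ with parts $F_{i}=F\cap E_{i}$, the graph $(V,F)$ is the disjoint union of the graphs $(V_{i},F_{i})$, $i=1,\dots,k$, together with the $l$ isolated vertices coming from the $K_{1}$-components; hence by~\eqref{product},
\[
D\bigl((V,F),x\bigr)=x^{l}\prod_{i=1}^{k}D\bigl((V_{i},F_{i}),x\bigr).
\]

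Substituting this into the left-hand side and distributing the product over the independent choices of the $F_{i}$ gives
\[
\sum_{F\subseteq E}(-1)^{|F|}D\bigl((V,F),x\bigr)
=x^{l}\prod_{i=1}^{k}\Biggl(\sum_{F_{i}\subseteq E_{i}}(-1)^{|F_{i}|}D\bigl((V_{i},F_{i}),x\bigr)\Biggr).
\]
Now each $G_{i}=(V_{i},E_{i})$ is connected and bipartite, with bipartition $V_{i}=(Y\cap V_{i})\cup(Z\cap V_{i})$ inherited from $G$; since $G_{i}$ is connected and has at least one edge, both parts of this bipartition are nonempty. Therefore Lemma~\ref{Lemma_bipart} applies to each factor and evaluates it as $(-1)^{|Y\cap V_{i}|}x^{|Z\cap V_{i}|}+(-1)^{|Z\cap V_{i}|}x^{|Y\cap V_{i}|}$, which yields the asserted identity.

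The main point requiring care is the separation of the $K_{1}$-components from the rest: isolating them is exactly what guarantees that every remaining component $G_{i}$ has a nonempty edge set, and this nonemptiness together with connectedness is precisely the hypothesis under which Lemma~\ref{Lemma_bipart} can be invoked for each factor. Beyond this, the argument is purely formal — the only routine verifications are that $F\mapsto(F_{1},\dots,F_{k})$ is the claimed bijection and that $D(K_{1},x)=x$.
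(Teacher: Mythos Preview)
Your proof is correct and follows essentially the same approach as the paper: separate the $K_{1}$-components (contributing the factor $x^{l}$), use multiplicativity of $D$ over components to factor the alternating sum over $F\subseteq E$ into a product of alternating sums over each $E_{i}$, and then apply Lemma~\ref{Lemma_bipart} to each connected factor. If anything, you are slightly more explicit than the paper in justifying why the hypotheses of Lemma~\ref{Lemma_bipart} (nonempty $Y\cap V_{i}$ and $Z\cap V_{i}$) are met.
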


\begin{proof}
For the one-vertex graph $K_{1}=\left(  \left\{  v\right\}  ,\emptyset\right)
$, we obtain%
\[
\sum_{F\subseteq\emptyset}(-1)^{\left\vert F\right\vert }D\left(  \left(
\left\{  v\right\}  ,F\right)  ,x\right)  =x.
\]
By Equation (\ref{product}), we obtain%
\begin{align*}
\sum_{F\subseteq E}(-1)^{\left\vert F\right\vert }D\left(  \left(  V,F\right)
,x\right)   &  =\sum_{F\subseteq E}(-1)^{\left\vert F\right\vert }x^{l}%
{\displaystyle\prod\limits_{i=1}^{k}}
D\left(  \left(  V_{i},F\cap E_{i}\right)  ,x\right) \\
&  =x^{l}\sum_{F\subseteq E}%
{\displaystyle\prod\limits_{i=1}^{k}}
(-1)^{\left\vert F\cap E_{i}\right\vert }D\left(  \left(  V_{i},F\cap
E_{i}\right)  ,x\right) \\
&  =x^{l}%
{\displaystyle\prod\limits_{i=1}^{k}}
\sum_{F\subseteq E_{i}}(-1)^{\left\vert F\right\vert }D\left(  \left(
V_{i},F\right)  ,x\right) \\
&  =x^{l}%
{\displaystyle\prod\limits_{i=1}^{k}}
\left[  (-1)^{\left\vert Y\cap V_{i}\right\vert }x^{\left\vert Z\cap
V_{i}\right\vert }+(-1)^{\left\vert Z\cap V_{i}\right\vert }x^{\left\vert
Y\cap V_{i}\right\vert }\right]  ,
\end{align*}
where the last equality is valid due to Lemma \ref{Lemma_bipart}.
\end{proof}

Observe that $(-1)^{\left\vert Y\right\vert }x^{\left\vert Z\right\vert
}+(-1)^{\left\vert Z\right\vert }x^{\left\vert Y\right\vert }\neq0$ for any
bipartition $V=Y\cup Z$, which shows together with Lemma \ref{lemma_bipart2}
that
\[
\sum_{F\subseteq E}(-1)^{\left\vert F\right\vert }D\left(  \left(  V,F\right)
,x\right)  \neq0
\]
for any bipartite graph $G=(V,E)$. Moreover, we have the following statement.

\begin{theorem}
\label{theo_main}Let $G=(V,E)$ be a graph. Then%
\[
\sum_{F\subseteq E}(-1)^{\left\vert F\right\vert }D\left(  \left(  V,F\right)
,x\right)  \neq0
\]
if and only if $G$ is bipartite.
\end{theorem}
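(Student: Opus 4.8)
The plan is to prove both directions of the equivalence. The "if" direction is already in hand: Lemma \ref{lemma_bipart2} gives an explicit product formula for the alternating sum when $G$ is bipartite, and the remark immediately preceding the theorem observes that each factor $(-1)^{|Y\cap V_i|}x^{|Z\cap V_i|}+(-1)^{|Z\cap V_i|}x^{|Y\cap V_i|}$ is a nonzero polynomial (the two monomials cannot cancel since their signs and the parity relation prevent it, or more simply one checks the cases on $|Y\cap V_i|,|Z\cap V_i|$ directly). Hence the whole product, times $x^l$, is a nonzero polynomial. So it remains to prove the contrapositive of "only if": if $G$ is \emph{not} bipartite, then $\sum_{F\subseteq E}(-1)^{|F|}D((V,F),x)=0$.

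For the nonbipartite case, I would expand exactly as in the proof of Lemma \ref{Lemma_bipart}: interchange the order of summation to get
\[
\sum_{F\subseteq E}(-1)^{|F|}D((V,F),x)=\sum_{W\subseteq V}x^{|W|}\sum_{\substack{F\subseteq E\\ N_{(V,F)}[W]=V}}(-1)^{|F|}.
\]
Now fix $W\subseteq V$. By Proposition \ref{prop_delta}, whether $W$ dominates $(V,F)$ depends only on $F\cap\partial(W)$, so the inner sum factors as
\[
\Bigl(\sum_{\substack{F_1\subseteq E\cap\partial(W)\\ N_{(V,F_1)}[W]=V}}(-1)^{|F_1|}\Bigr)\Bigl(\sum_{F_2\subseteq E\setminus\partial(W)}(-1)^{|F_2|}\Bigr),
\]
and the second factor is $(1-1)^{|E\setminus\partial(W)|}$, which is $0$ unless $E\setminus\partial(W)=\emptyset$, i.e. unless \emph{every} edge of $G$ has exactly one endpoint in $W$. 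But "every edge has exactly one endpoint in $W$" says precisely that $(W,V\setminus W)$ is a proper $2$-colouring of $G$, which is impossible when $G$ is not bipartite. Hence the inner sum vanishes for every $W$, and the whole alternating sum is $0$.

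The argument is essentially routine once the interchange-of-summation trick from Lemma \ref{Lemma_bipart} is reused; there is no real obstacle. The one point that deserves a clean statement is the equivalence "$E\setminus\partial(W)=\emptyset$ for some $W\subseteq V$ $\iff$ $G$ is bipartite", which is just the definition of bipartiteness via a vertex $2$-partition with all edges crossing (allowing the degenerate partitions when $G$ has isolated vertices or is edgeless); I would spell this out in one sentence. Combining this with the "if" direction from Lemma \ref{lemma_bipart2} and the preceding remark completes the proof.
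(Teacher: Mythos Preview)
Your proposal is correct and follows essentially the same route as the paper: the ``if'' direction is taken from Lemma~\ref{lemma_bipart2} and the preceding remark, and for non-bipartite $G$ you interchange summation, split $F$ via $\partial(W)$ using Proposition~\ref{prop_delta}, and observe that the factor $\sum_{F_2\subseteq E\setminus\partial(W)}(-1)^{|F_2|}$ vanishes because $E\setminus\partial(W)\neq\emptyset$ for every $W$. Your explicit identification of ``$E\setminus\partial(W)=\emptyset$'' with ``$(W,V\setminus W)$ is a bipartition of $G$'' is in fact a bit more carefully worded than the paper's one-line justification.
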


\begin{proof}
It remains to show that the sum vanishes for non-bipartite graphs. Using
Proposition \ref{prop_delta}, we obtain
\begin{align*}
\sum_{F\subseteq E}(-1)^{\left\vert F\right\vert }D\left(  \left(  V,F\right)
,x\right)   &  =\sum_{F\subseteq E}\sum_{\substack{W\subseteq V \\N_{\left(
V,F\right)  }\left[  W\right]  =V}}(-1)^{\left\vert F\right\vert
}x^{\left\vert W\right\vert }\\
&  =\sum_{W\subseteq V}x^{\left\vert W\right\vert }\sum_{\substack{F\subseteq
E \\N_{\left(  V,F\right)  }\left[  W\right]  =V}}(-1)^{\left\vert
F\right\vert }\\
&  =\sum_{W\subseteq V}x^{\left\vert W\right\vert }\sum_{\substack{F_{1}%
\subseteq\partial(W) \\N_{\left(  V,F_{1}\right)  }\left[  W\right]  =V
}}(-1)^{\left\vert F_{1}\right\vert }\sum_{F_{2}\subseteq E\setminus
\partial(W)}(-1)^{\left\vert F_{2}\right\vert }.
\end{align*}
Since $G$ is not a bipartite graph, the set $F_{2}$ is nonempty, which yields%
\[
\sum_{F_{2}\subseteq E\setminus\partial(W)}(-1)^{\left\vert F_{2}\right\vert
}=0
\]
and hence the statement of the theorem.
\end{proof}

There is also a \textquotedblleft local version\textquotedblright\ for one
direction of Theorem \ref{theo_main}, which can be proved by the same method.

\begin{theorem}
Let $G=(V,E)$ be a graph and $A\subseteq E$ an edge subset such that $\left(
V,A\right)  $ contains an odd cycle. Then%
\[
\sum_{F\subseteq A}(-1)^{\left\vert F\right\vert }D\left(  G-F,x\right)  =0.
\]

\end{theorem}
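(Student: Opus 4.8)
The plan is to mimic the proof of Theorem~\ref{theo_main} essentially verbatim, but restricting all edge summations to the subset $A$. Writing $G-F$ for $(V,E\setminus F)$, I would first swap the order of summation,
\[
\sum_{F\subseteq A}(-1)^{\left\vert F\right\vert }D(G-F,x)
=\sum_{W\subseteq V}x^{\left\vert W\right\vert }\sum_{\substack{F\subseteq A\\N_{(V,E\setminus F)}[W]=V}}(-1)^{\left\vert F\right\vert },
\]
so the whole problem reduces to showing that for every fixed $W\subseteq V$ the inner alternating sum over $F\subseteq A$ vanishes.

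Next I would analyse the inner sum for a fixed $W$. By Proposition~\ref{prop_delta}, whether $W$ dominates $(V,E\setminus F)$ depends only on $(E\setminus F)\cap\partial(W)$, equivalently only on $F\cap\partial(W)$; it does not depend on $F\cap(A\setminus\partial(W))$. Hence I would factor $F=F_{1}\cup F_{2}$ with $F_{1}\subseteq A\cap\partial(W)$ and $F_{2}\subseteq A\setminus\partial(W)$, giving
\[
\sum_{\substack{F_{1}\subseteq A\cap\partial(W)\\N_{(V,(E\setminus F_{1})\cap\partial(W))}[W]=V}}(-1)^{\left\vert F_{1}\right\vert}\sum_{F_{2}\subseteq A\setminus\partial(W)}(-1)^{\left\vert F_{2}\right\vert}.
\]
The second factor is $(1-1)^{\left\vert A\setminus\partial(W)\right\vert}$, which is $0$ provided $A\setminus\partial(W)\neq\emptyset$, and is $1$ only when $A\subseteq\partial(W)$.

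The key step is therefore to rule out $A\subseteq\partial(W)$. But $\partial(W)$ is precisely the edge cut between $W$ and $V\setminus W$, so $(V,\partial(W))$ is a bipartite graph (its edges go only between the two sides of the partition $W\mathbin{\dot\cup}(V\setminus W)$). If $A\subseteq\partial(W)$ then $(V,A)$ would be a subgraph of a bipartite graph and hence contain no odd cycle, contradicting the hypothesis that $(V,A)$ contains an odd cycle. Thus $A\setminus\partial(W)\neq\emptyset$ for every $W\subseteq V$, the inner sum vanishes for each $W$, and summing over $W$ gives the claimed identity. The only mildly delicate point — the ``main obstacle'' such as it is — is making the odd-cycle/edge-cut argument precise: one should note that every edge of an odd cycle in $(V,A)$ lying inside $\partial(W)$ would force the cycle's vertices to alternate between $W$ and $V\setminus W$, which is impossible for a cycle of odd length; this is exactly the standard fact that a graph is bipartite iff it has no odd cycle, applied to a single cycle.
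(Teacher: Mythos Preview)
Your proposal is correct and follows precisely the approach the paper intends: the paper does not spell out a proof of this theorem but explicitly states that it ``can be proved by the same method'' as Theorem~\ref{theo_main}, and what you have written is exactly that method, adapted from $D((V,F),x)$ to $D(G-F,x)$ and with the edge sums restricted to $A$. The only point worth noting is that your odd-cycle argument is the place where the hypothesis enters, and it is stated cleanly; there is no gap.
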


\subsection{Applications of Spanning Subgraph Expansions}

Let $G=(V,E)$ be a given graph. We define for any edge subset $F$ of $G$,%
\[
h(F)=\sum_{A\subseteq F}(-1)^{\left\vert A\right\vert }D\left(  \left(
V,A\right)  ,x\right)  .
\]
M\"{o}bius inversion yields%
\[
D\left(  \left(  V,F\right)  ,x\right)  =\sum_{A\subseteq F}(-1)^{\left\vert
A\right\vert }h(A).
\]
According to Lemma \ref{Lemma_bipart}, Lemma \ref{lemma_bipart2}, and Theorem
\ref{theo_main}, we define%
\begin{equation}
h(F)=\left\{
\begin{array}
[c]{l}%
x^{l}%
{\displaystyle\prod\limits_{i=1}^{k}}
(-1)^{\left\vert E_{i}\right\vert }\left[  (-1)^{\left\vert Y\cap
V_{i}\right\vert }x^{\left\vert Z\cap V_{i}\right\vert }+(-1)^{\left\vert
Z\cap V_{i}\right\vert }x^{\left\vert Y\cap V_{i}\right\vert }\right]  \text{,
if }\left(  V,F\right)  \text{ is bipartite,}\\
0\text{ otherwise.}%
\end{array}
\right.  \label{h-function}%
\end{equation}
Here the notations are as in Lemma \ref{lemma_bipart2}. We can now conclude
that the domination polynomial of a graph $G=(V,E)$ is a sum of $h$-function
values of spanning bipartite subgraphs, i.e.%
\begin{equation}
D\left(  G,x\right)  =\sum_{\substack{B\subseteq E\\\left(  V,B\right)  \text{
is bipartite}}}h(B). \label{h-sum}%
\end{equation}
The number of dominating sets of $G=(V,E)$ is $D(G,1)$. In order to derive
this number from Equation (\ref{h-sum}), we define $h_{1}$ by substituting
$x=1$ in $h$, that is%
\[
h_{1}(F)=%
{\displaystyle\prod\limits_{i=1}^{k}}
(-1)^{\left\vert E_{i}\right\vert }\left[  (-1)^{\left\vert Y\cap
V_{i}\right\vert }+(-1)^{\left\vert Z\cap V_{i}\right\vert }\right]  .
\]
Observe that $h_{1}(\emptyset)=1$ and $h_{1}(F)\equiv0$ $(\operatorname{mod}%
2)$ for $F\neq\emptyset$, which gives the following statement.

\begin{corollary}
\label{coro_odd}For any graph $G$, the number of dominating sets of $G$ is odd.
\end{corollary}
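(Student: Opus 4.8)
The plan is to evaluate the spanning-subgraph expansion (\ref{h-sum}) at $x=1$ and to analyse the resulting integer sum modulo $2$. Since (\ref{h-sum}) is an identity of polynomials in $x$ with integer coefficients, substituting $x=1$ is legitimate and commutes with the (finite) summation, yielding
\[
d(G)=D(G,1)=\sum_{\substack{B\subseteq E\\(V,B)\text{ is bipartite}}}h_1(B),
\]
a finite sum of integers. Hence it suffices to determine this sum modulo $2$, and the whole argument reduces to a parity computation term by term.

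First I would single out the contribution of $B=\emptyset$. The edgeless spanning subgraph $(V,\emptyset)$ is bipartite, and in the notation of Lemma \ref{lemma_bipart2} it has no component with a nonempty edge set, so $k=0$ and the empty product gives $h_1(\emptyset)=1$. Next I would show that every remaining term is even. Fix a bipartite $B\subseteq E$ with $B\neq\emptyset$, and let $G_1=(V_1,E_1),\dots,G_k=(V_k,E_k)$ be the components of $(V,B)$ that carry at least one edge; since $B\neq\emptyset$ we have $k\geq 1$. In the product defining $h_1(B)$, each of the $k$ factors $(-1)^{\left\vert Y\cap V_i\right\vert}+(-1)^{\left\vert Z\cap V_i\right\vert}$ is a sum of two numbers from $\{-1,+1\}$, hence lies in $\{-2,0,2\}$; in particular it is even. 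The extra sign factors $(-1)^{\left\vert E_i\right\vert}$ are units and do not affect parity, so $h_1(B)$ is $\pm$ a product of $k\geq 1$ even integers, giving $h_1(B)\equiv 0\pmod 2$. Combining, $d(G)\equiv h_1(\emptyset)=1\pmod 2$, i.e. $d(G)$ is odd.

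I do not expect a genuine obstacle here: once (\ref{h-sum}) and the explicit form of $h_1$ from (\ref{h-function}) are in hand, the argument is pure bookkeeping. The one point worth a line of justification is that the passage from $h$ to $h_1$ is valid termwise — that (\ref{h-sum}) holds as a polynomial identity, so that the $x=1$ specialisation may be distributed over the sum ranging over bipartite spanning subgraphs — after which the parity computation above closes the proof. It is also worth noting, as an aside, that the same computation shows $d(G)$ is congruent to $1$ modulo $2^{c}$ where $c$ is the least number of edge-bearing components over all nonempty bipartite spanning subgraphs, but for the corollary only the case $c\geq 1$ is needed.
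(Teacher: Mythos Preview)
Your argument is correct and is exactly the paper's: substitute $x=1$ in the bipartite spanning-subgraph expansion (\ref{h-sum}), note that $h_1(\emptyset)=1$ while for $B\neq\emptyset$ each factor $(-1)^{|Y\cap V_i|}+(-1)^{|Z\cap V_i|}\in\{-2,0,2\}$ makes $h_1(B)$ even, and conclude $d(G)\equiv 1\pmod 2$. (Your closing aside is harmless but vacuous: a single-edge $B$ has one edge-bearing component, so $c=1$ whenever $E\neq\emptyset$.)
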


For alternative proofs of this corollary, see \cite{Brouwer}.

\begin{remark}
In almost the same manner, by substituting $x=-1$ in $h$, we can prove that
$D(G,-1)$ is odd. Moreover, from the Equations (\ref{h-function}) and
(\ref{h-sum}), we obtain%
\[
D(G,-1)=(-1)^{\left\vert V\right\vert }\sum_{\substack{F\subseteq
E\\(V,F)\text{ is bipartite}}}(-1)^{\left\vert F\right\vert }2^{c(F)},
\]
where $c(F)$ denotes here the number of components of $\left(  V,F\right)  $
that have at least one edge.
\end{remark}

\section{Vertex Induced Subgraphs}

Let $G=(V,E)$ be a graph and $W\subseteq V$. We denote by $G\left[  W\right]
$ the \emph{vertex induced subgraph} of $G$:
\[
G\left[  W\right]  =\left(  W,\left\{  \left\{  u,v\right\}  \in E\mid u\in
W\text{ and }v\in W\right\}  \right)  .
\]

\begin{theorem}
\label{theo_vertex_sum}Any connected graph $G=(V,E)$ satisfies%
\[
\sum_{W\subseteq V}(-1)^{\left\vert W\right\vert }D\left(  G\left[  W\right]
,x\right)  =1+\left(  -x\right)  ^{\left\vert V\right\vert }.
\]

\end{theorem}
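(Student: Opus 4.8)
The plan is to swap the order of summation, writing $D(G[W],x)$ as a sum over dominating sets of the induced subgraph. A subset $U\subseteq W$ dominates $G[W]$ precisely when $N_G[U]\supseteq W$, i.e.\ when $W\subseteq N_G[U]$. So the double sum becomes $\sum_{U\subseteq V} x^{|U|}\sum_{W:\,U\subseteq W\subseteq N_G[U]}(-1)^{|W|}$. The inner sum ranges over all $W$ sandwiched between $U$ and $N_G[U]$; writing $W = U\cup S$ with $S\subseteq N_G[U]\setminus U$, it equals $(-1)^{|U|}\sum_{S\subseteq N_G[U]\setminus U}(-1)^{|S|} = (-1)^{|U|}(1-1)^{|N_G[U]\setminus U|}$. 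This vanishes unless $N_G[U]\setminus U=\emptyset$, that is, unless $N_G[U]=U$.

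So the only surviving terms come from sets $U$ with $N_G[U]=U$, and each such term contributes $(-x)^{|U|}$. For a connected graph, $N_G[U]=U$ forces $U=\emptyset$ or $U=V$: if $U$ is nonempty and proper, connectedness gives an edge from $U$ to $V\setminus U$, so $N_G[U]$ strictly contains $U$. Hence the surviving values of $U$ are exactly $\emptyset$ (contributing $1$) and $V$ (contributing $(-x)^{|V|}$), giving the claimed identity $1+(-x)^{|V|}$.

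The one point requiring care is the interchange of summations and the bookkeeping of the inner sandwich sum: one must check that the convention $D(G[\emptyset],x)=1$ (the empty graph has the empty set as its unique dominating set) is consistent, so that $U=W=\emptyset$ is correctly counted, and that the degenerate binomial $(1-1)^0=1$ is handled — this is exactly the $U=V$ case for connected $G$ (when $G=K_1$ one also gets $U=\emptyset$ and $U=V$ coinciding-in-spirit but distinct as sets, and the formula $1+(-x)$ still holds). I expect this combinatorial interchange to be entirely routine; the only genuinely graph-theoretic input is the trivial observation that $N_G[U]=U$ with $G$ connected forces $U\in\{\emptyset,V\}$, which is where connectedness is used. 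A remark worth adding afterward: dropping connectedness and applying multiplicativity \eqref{product} componentwise would yield $\prod_{i=1}^{k}(1+(-x)^{|V_i|})$ for a graph with components $G_1,\dots,G_k$.
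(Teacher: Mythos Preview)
Your proof is correct and follows essentially the same approach as the paper: expand $D(G[W],x)$ as a sum over dominating sets, interchange the order of summation, rewrite the domination condition as $U\subseteq W\subseteq N_G[U]$, and collapse the inner sum via $(1-1)^{|N_G[U]\setminus U|}$, leaving only $U=\emptyset$ and $U=V$ by connectedness. Your added remarks on the empty-graph convention and the componentwise product for disconnected $G$ anticipate exactly what the paper records next as Corollary~\ref{coro_type}.
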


\begin{proof}
By switching the order of summation, we have%
\begin{align*}
\sum_{W\subseteq V}(-1)^{\left\vert W\right\vert }D\left(  G\left[  W\right]
,x\right)   &  =\sum_{W\subseteq V}(-1)^{\left\vert W\right\vert }%
\sum_{\substack{T\subseteq W\\N_{G\left[  W\right]  }\left[  T\right]
=W}}x^{\left\vert T\right\vert }\\
&  =\sum_{T\subseteq V}x^{\left\vert T\right\vert }\sum_{\substack{W\supseteq
T\\N_{G\left[  W\right]  }\left[  T\right]  =W}}(-1)^{\left\vert W\right\vert
}\\
&  =\sum_{T\subseteq V}x^{\left\vert T\right\vert }\sum_{W:T\subseteq
W\subseteq N_{G\left[  W\right]  }\left[  T\right]  }(-1)^{\left\vert
W\right\vert }\\
&  =\sum_{T\subseteq V}x^{\left\vert T\right\vert }\sum_{W:T\subseteq
W\subseteq N_{G}\left[  T\right]  }(-1)^{\left\vert W\right\vert }\\
&  =\sum_{T\subseteq V}(-x)^{\left\vert T\right\vert }\sum_{Y\subseteq
N_{G}\left[  T\right]  \setminus T}(-1)^{\left\vert Y\right\vert }.
\end{align*}
Since $G$ is connected, the set $N_{G}\left[  T\right]  \setminus T$ is empty
if and only if $T=\emptyset$ or $T=V$. Hence we obtain%
\[
\sum_{T\subseteq V}(-x)^{\left\vert T\right\vert }\sum_{Y\subseteq
N_{G}\left[  T\right]  \setminus T}(-1)^{\left\vert Y\right\vert
}=1+(-x)^{\left\vert V\right\vert }.\text{ }%
\]

\end{proof}

\begin{definition}
Let $G=(V,E)$ be a graph with $n$ vertices. The \emph{type} of $G$ is an
integer partition $\lambda_{G}=\left(  \lambda_{1},...,\lambda_{k}\right)
\vdash n$ that gives the sequence of orders of the components of $G$. We write
$i\in\lambda_{G}$ in order to indicate that $i$ is a part of $\lambda_{G}$.
The number of parts of $\lambda_{G}$ is denoted by $\left\vert \lambda
_{G}\right\vert $.
\end{definition}

Observe that for all $W\subseteq V$ the relation $\left\vert \lambda_{G\left[
W\right]  }\right\vert \leq\alpha(G)$ is satisfied, where $\alpha(G)$ denotes
the independence number of $G$. Theorem \ref{theo_vertex_sum} and Equation
(\ref{product}) immediately imply the following statement.

\begin{corollary}
\label{coro_type}For any graph $G=(V,E)$, we have%
\begin{equation}
\sum_{W\subseteq V}(-1)^{\left\vert W\right\vert }D\left(  G\left[  W\right]
,x\right)  =%
{\displaystyle\prod\limits_{i\in\lambda_{G}}}
\left(  1+(-x)^{i}\right)  . \label{vertex sum}%
\end{equation}

\end{corollary}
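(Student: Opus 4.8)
The plan is to reduce the statement to Theorem~\ref{theo_vertex_sum} by exploiting multiplicativity over components. Write $G_1=(V_1,E_1),\dots,G_k=(V_k,E_k)$ for the components of $G$, so that $\lambda_G=(|V_1|,\dots,|V_k|)$. First I would observe that every vertex subset $W\subseteq V$ splits uniquely as a disjoint union $W=W_1\cup\dots\cup W_k$ with $W_i=W\cap V_i$, and that the induced subgraph $G[W]$ is exactly the disjoint union of the graphs $G_i[W_i]$, since no edge of $G$ joins distinct $V_i$. Consequently $(-1)^{|W|}=\prod_{i=1}^k(-1)^{|W_i|}$, and by Equation~(\ref{product}) we have $D(G[W],x)=\prod_{i=1}^k D(G_i[W_i],x)$.

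Next I would use this factorization to turn the sum over $W\subseteq V$ into a product of independent sums, one per component:
\[
\sum_{W\subseteq V}(-1)^{|W|}D(G[W],x)=\prod_{i=1}^k\;\sum_{W_i\subseteq V_i}(-1)^{|W_i|}D(G_i[W_i],x).
\]
This is the usual interchange of a sum of products with a product of sums, valid because the tuples $(W_1,\dots,W_k)$ range over the Cartesian product of the power sets of the $V_i$.

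Finally, each component $G_i$ is connected, so Theorem~\ref{theo_vertex_sum} applies to $G_i$ and gives $\sum_{W_i\subseteq V_i}(-1)^{|W_i|}D(G_i[W_i],x)=1+(-x)^{|V_i|}$. Substituting this into the product and recalling that $|V_i|$ is the $i$-th part of $\lambda_G$ yields $\prod_{i=1}^k\bigl(1+(-x)^{|V_i|}\bigr)=\prod_{i\in\lambda_G}\bigl(1+(-x)^i\bigr)$, which is the asserted identity.

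There is essentially no obstacle in this argument; the only point needing a moment's care is verifying that forming an induced subgraph commutes with the decomposition of $G$ into its components, which is immediate from the fact that components are the maximal connected pieces. The case $k=1$ is precisely Theorem~\ref{theo_vertex_sum}, and the general statement is just its multiplicative extension.
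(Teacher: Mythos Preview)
Your argument is correct and is exactly the approach the paper takes: the corollary is stated as an immediate consequence of Theorem~\ref{theo_vertex_sum} together with the multiplicativity in Equation~(\ref{product}), and you have spelled out precisely that deduction.
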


The application of the M\"{o}bius inversion to Equation (\ref{vertex sum})
yields%
\begin{align}
D(G,x)  &  =\sum_{W\subseteq V}(-1)^{\left\vert W\right\vert }%
{\displaystyle\prod\limits_{i\in\lambda_{G\left[  W\right]  }}}
\left(  1+(-x)^{i}\right) \nonumber\\
&  =\sum_{W\subseteq V}%
{\displaystyle\prod\limits_{i\in\lambda_{G\left[  W\right]  }}}
\left(  x^{i}+(-1)^{i}\right)  . \label{moeb2}%
\end{align}

\begin{remark}
If we substitute $x=1$ (or $x=-1$) in Equation (\ref{moeb2}) then all the
products are equal to $0$ $(\operatorname{mod}2)$. There is only one
exception, namely the empty product corresponding to $W=\emptyset$, which is
1. This gives an alternative proof of Corollary \ref{coro_odd}.
\end{remark}

We call a graph $G$ \emph{conformal} if all of its components are of even
order. Let $\mathrm{Con}(G)$ be the set of all vertex-induced conformal
subgraphs of $G$ and let $k(G)$ be the number of components of $G$.

\begin{theorem}
\label{theo_con}The number of dominating sets of a graph $G$ satisfies%
\[
d(G)=\sum_{H\in\mathrm{Con}(G)}2^{k(H)}.
\]

\end{theorem}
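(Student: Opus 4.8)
The plan is to specialize the M\"obius-inverted identity~(\ref{moeb2}) at $x=1$. Since $d(G)=D(G,1)$, substituting $x=1$ into the second form of~(\ref{moeb2}) would give
\[
d(G)=\sum_{W\subseteq V}\prod_{i\in\lambda_{G[W]}}\bigl(1+(-1)^{i}\bigr).
\]
The first key step is the parity dichotomy of the individual factors: $1+(-1)^{i}=2$ when $i$ is even and $1+(-1)^{i}=0$ when $i$ is odd. Consequently the term indexed by $W$ contributes nothing unless every part of $\lambda_{G[W]}$ is even, that is, unless every component of $G[W]$ has even order --- which is precisely the condition that $G[W]$ be conformal.

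Next I would collect the surviving terms. For a subset $W$ with $G[W]$ conformal, each factor contributes a $2$, so the product equals $2^{|\lambda_{G[W]}|}$, and by definition $|\lambda_{G[W]}|$ is the number of components $k(G[W])$. Because the map $W\mapsto G[W]$ is a bijection from subsets of $V$ onto the vertex-induced subgraphs of $G$, summing over those $W$ for which $G[W]$ is conformal amounts to summing over $\mathrm{Con}(G)$, which yields
\[
d(G)=\sum_{\substack{W\subseteq V\\ G[W]\ \text{conformal}}}2^{k(G[W])}=\sum_{H\in\mathrm{Con}(G)}2^{k(H)}.
\]
The degenerate case $W=\emptyset$ is consistent with the statement: it contributes the empty product $1=2^{0}$, corresponding to the empty graph, which is vacuously conformal and has no components.

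I do not expect any real obstacle here once~(\ref{moeb2}) is in hand; the only points that need a line of care are the parity evaluation of $1+(-1)^{i}$ and the identification of $|\lambda_{G[W]}|$ with $k(G[W])$. If one preferred a route independent of~(\ref{moeb2}), one could instead set $x=1$ directly in Corollary~\ref{coro_type} and apply M\"obius inversion over the Boolean lattice of $V$; this reproduces the same specialization, but reusing~(\ref{moeb2}) is the most economical path and is what I would write up.
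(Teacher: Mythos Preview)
Your proposal is correct and is exactly the approach the paper takes: substitute $x=1$ into Equation~(\ref{moeb2}), observe that each factor $1+(-1)^{i}$ is $2$ or $0$ according as $i$ is even or odd, so only conformal induced subgraphs survive and each contributes $2^{k(G[W])}$. Your handling of the empty subset (empty product equals $2^{0}=1$) also matches the paper's remark on the null graph.
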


\begin{proof}
The statement follows from Equation (\ref{moeb2}) by substituting $x=1$. In
this case any component of odd order leads to a zero product, such that only
conformal vertex-induced subgraphs count. Observe that the null graph is
conformal and has no components, which produces the only odd term of the sum,
namely $2^{0}=1$.
\end{proof}

Equation (\ref{moeb2}) offers a possibility to derive a decomposition for the
domination polynomial.

\begin{theorem}
\label{theo_rec}Let $G=(V,E)$ be a graph and $v\in V$. Then%
\[
D(G,x)=D(G-v,x)+\sum_{\substack{\left\{  v\right\}  \subseteq W\subseteq
V\\G\left[  W\right]  \text{ is connected}}}\left(  x^{\left\vert W\right\vert
}+(-1)^{\left\vert W\right\vert }\right)  D\left(  G-N[W],x\right)  .
\]

\end{theorem}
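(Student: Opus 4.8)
The plan is to start from Equation (\ref{moeb2}), which expresses
\[
D(G,x)=\sum_{W\subseteq V}\ \prod_{i\in\lambda_{G[W]}}\left(x^{i}+(-1)^{i}\right),
\]
and split the sum over $W\subseteq V$ according to whether $v\in W$ or not. The terms with $v\notin W$ range exactly over the subsets of $V\setminus\{v\}$, and since $G[W]=(G-v)[W]$ for such $W$, they sum (again by (\ref{moeb2}), applied to $G-v$) to $D(G-v,x)$. This accounts for the first term on the right-hand side, and the remaining work is to show that the terms with $v\in W$ contribute the claimed sum over connected induced subgraphs containing $v$.

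For the terms with $v\in W$, the idea is to isolate the component of $G[W]$ that contains $v$. Write $C$ for the vertex set of that component, so $G[C]$ is connected, $v\in C\subseteq W$, and no vertex of $W\setminus C$ is adjacent in $G$ to any vertex of $C$; equivalently $W\setminus C\subseteq V\setminus N_G[C]$. Conversely, for any connected induced subgraph $G[C]$ with $v\in C$, any set of the form $W=C\cup W'$ with $W'\subseteq V\setminus N_G[C]$ has $C$ as the vertex set of the component of $v$ in $G[W]$. The partition $\lambda_{G[W]}$ then splits as the single part $\lvert C\rvert$ together with the parts of $\lambda_{G[W']}$ computed inside $G-N_G[C]$ (note $W'\subseteq V\setminus N_G[C]$ and edges among $W'$ are the same in $G$ and in $G-N_G[C]$). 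Hence the product factorizes as
\[
\prod_{i\in\lambda_{G[W]}}\left(x^{i}+(-1)^{i}\right)=\left(x^{\lvert C\rvert}+(-1)^{\lvert C\rvert}\right)\prod_{i\in\lambda_{(G-N_G[C])[W']}}\left(x^{i}+(-1)^{i}\right).
\]

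Summing this identity first over all admissible $W'\subseteq V\setminus N_G[C]$ with $C$ fixed, the inner sum is exactly $D(G-N_G[C],x)$ by (\ref{moeb2}) applied to the graph $G-N_G[C]$ (whose vertex set is $V\setminus N_G[C]$). Then summing over all connected $C$ with $v\in C\subseteq V$, and writing $W$ for $C$ to match the statement's notation, yields
\[
\sum_{\substack{v\in W\subseteq V\\ G[W]\text{ connected}}}\left(x^{\lvert W\rvert}+(-1)^{\lvert W\rvert}\right)D\bigl(G-N[W],x\bigr),
\]
which is the second term. Combining with the $v\notin W$ contribution completes the proof. The main obstacle is purely bookkeeping: verifying cleanly that the map $W\mapsto(C,W')$ is a bijection between $\{W : v\in W\subseteq V\}$ and pairs $(C,W')$ with $G[C]$ connected, $v\in C$, and $W'\subseteq V\setminus N_G[C]$, and that it respects the component-type decomposition — in particular that no edges of $G$ run between $W'$ and $C$, so that $\lambda_{G[W]}$ really is $(\lvert C\rvert)$ together with $\lambda_{(G-N[C])[W']}$. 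Once that correspondence is set up, the factorization of the product and the two applications of (\ref{moeb2}) are immediate.
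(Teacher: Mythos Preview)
Your proof is correct and follows essentially the same route as the paper's: start from Equation~(\ref{moeb2}), split the sum over $W\subseteq V$ according to whether $v\in W$, identify the $v\notin W$ part with $D(G-v,x)$, and for $v\in W$ isolate the component of $G[W]$ containing $v$ to factor out $\bigl(x^{|C|}+(-1)^{|C|}\bigr)$, with the remaining sum over $W'\subseteq V\setminus N[C]$ collapsing via~(\ref{moeb2}) to $D(G-N[C],x)$. You spell out the bijection $W\leftrightarrow(C,W')$ and the component-type factorization more explicitly than the paper does, but the argument is the same.
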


\begin{proof}
We start from Equation (\ref{moeb2}):%
\begin{align*}
D(G,x)  &  =\sum_{W\subseteq V}%
{\displaystyle\prod\limits_{i\in\lambda_{G\left[  W\right]  }}}
\left(  x^{i}+(-1)^{i}\right) \\
&  =\sum_{W\subseteq V\setminus\left\{  v\right\}  }%
{\displaystyle\prod\limits_{i\in\lambda_{G\left[  W\right]  }}}
\left(  x^{i}+(-1)^{i}\right)  +\sum_{\left\{  v\right\}  \subseteq W\subseteq
V}%
{\displaystyle\prod\limits_{i\in\lambda_{G\left[  W\right]  }}}
\left(  x^{i}+(-1)^{i}\right) \\
&  =D(G-v,x)\\
&  +\sum_{\substack{\left\{  v\right\}  \subseteq W\subseteq V\\G\left[
W\right]  \text{ is connected}}}\left(  x^{\left\vert W\right\vert
}+(-1)^{\left\vert W\right\vert }\right)  \sum_{T\subseteq V\setminus N[W]}%
{\displaystyle\prod\limits_{i\in\lambda_{G\left[  T\right]  }}}
\left(  x^{i}+(-1)^{i}\right) \\
&  =D(G-v,x)+\sum_{\substack{\left\{  v\right\}  \subseteq W\subseteq
V\\G\left[  W\right]  \text{ is connected}}}\left(  x^{\left\vert W\right\vert
}+(-1)^{\left\vert W\right\vert }\right)  D\left(  G-N[W],x\right)  .
\end{align*}

\end{proof}

The following statement for the number of dominating sets of $G$ is an
immediate consequence of Theorem \ref{theo_rec}.

\begin{corollary}
Let $G=(V,E)$ be a graph and $v\in V$. Then the difference $d(G)-d(G-v)$ is even.
\end{corollary}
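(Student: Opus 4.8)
The plan is to specialize Theorem~\ref{theo_rec} at $x=1$ and read off the parity directly. Recall that $d(G)=D(G,1)$ and $d(G-v)=D(G-v,1)$, so substituting $x=1$ into the identity of Theorem~\ref{theo_rec} gives
\[
d(G)=d(G-v)+\sum_{\substack{\{v\}\subseteq W\subseteq V\\ G[W]\text{ is connected}}}\bigl(1+(-1)^{|W|}\bigr)\,d\bigl(G-N[W],1\bigr),
\]
where I have written $d(G-N[W],1)$ for the number of dominating sets of the induced subgraph $G-N[W]$ (an honest non-negative integer).

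Next I would observe that the coefficient $1+(-1)^{|W|}$ attached to each summand equals $2$ when $|W|$ is even and $0$ when $|W|$ is odd; in either case it is an even integer. Hence every term of the sum on the right-hand side is an even integer, and therefore the whole sum is even. Rearranging, $d(G)-d(G-v)$ equals that even sum, so it is even, which is the assertion of the corollary.

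There is essentially no obstacle here beyond bookkeeping: the content is entirely contained in Theorem~\ref{theo_rec}, and the only thing to verify is the elementary parity fact about $1+(-1)^{|W|}$, together with the remark that all the quantities $d(G-N[W],1)$ are integers so that "even sum of integers" makes sense. One minor point worth stating explicitly in the write-up is that the sum ranges only over connected induced subgraphs containing $v$, which is exactly the index set appearing in Theorem~\ref{theo_rec}; no further case analysis (e.g.\ on whether $N[W]=V$) is needed, since such a term simply contributes $\bigl(1+(-1)^{|W|}\bigr)\cdot 1$, still even. This completes the argument.
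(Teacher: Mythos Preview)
Your proof is correct and follows exactly the paper's approach: substitute $x=1$ in Theorem~\ref{theo_rec} and observe that each coefficient $1+(-1)^{|W|}$ is either $0$ or $2$, so the resulting sum is even. The paper writes this by dropping the odd-$|W|$ terms and pulling out a factor of $2$, but the content is identical.
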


\begin{proof}
When we substitute $x=1$ in Theorem \ref{theo_rec}, then we obtain%
\[
d(G)=d(G-v)+2\sum_{\substack{_{\substack{\left\{  v\right\}  \subseteq
W\subseteq V\\G\left[  W\right]  \text{ is connected}}}\\\left\vert
W\right\vert \text{ is even}}}d\left(  G-N[W]\right)  ,
\]
which gives the desired result.
\end{proof}

\section{Inclusion--Exclusion}

We obtain a different representation of the domination polynomial as a sum
ranging over vertex subsets by counting all vertex subsets of $G=(V,E)$ that
do not dominate the whole vertex set $V$ and applying inclusion-exclusion.

\begin{theorem}
[\cite{DT}]\label{theo_inc_exc}Let $G=(V,E)$ be a graph. Then
\begin{equation}
D(G,x)=\sum_{W\subseteq V}(-1)^{\left\vert W\right\vert }(1+x)^{\left\vert
V\setminus N\left[  W\right]  \right\vert }. \label{incl-excl}%
\end{equation}

\end{theorem}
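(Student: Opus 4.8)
The plan is to count, for each vertex subset $W\subseteq V$, the number of subsets $U\subseteq V$ that \emph{fail} to dominate $W$ in the sense that $N[U]$ misses at least one prescribed vertex, and then invert via inclusion–exclusion. First I would fix the target set $V$ and, for each vertex $w\in V$, consider the ``bad event'' that $w\notin N[U]$; equivalently, that $U$ avoids the closed neighborhood $N[w]$. A subset $U$ is a dominating set precisely when none of these bad events occurs. So the strategy is the standard sieve: the number of dominating sets of cardinality $k$ equals $\binom{n}{k}$ minus the number of size-$k$ subsets hitting some bad event, corrected by higher-order intersections. Carrying the generating-function weight $x^{|U|}$ through the sieve should yield the claimed formula.

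Concretely, I would write
\[
D(G,x)=\sum_{U\subseteq V}\Bigl[\,\prod_{w\in V}\bigl(1-[\,w\notin N[U]\,]\bigr)\Bigr]x^{|U|}
=\sum_{U\subseteq V}\sum_{W\subseteq V}(-1)^{|W|}\bigl[\,W\cap N[U]=\emptyset\,\bigr]\,x^{|U|},
\]
where the inner indicator selects exactly those $W$ all of whose vertices are undominated by $U$. Swapping the order of summation, the coefficient of $(-1)^{|W|}$ becomes $\sum_{U}[\,W\cap N[U]=\emptyset\,]\,x^{|U|}$. The key observation is that $W\cap N[U]=\emptyset$ is equivalent to $U\subseteq V\setminus N[W]$: a vertex of $W$ lies in $N[U]$ iff some vertex of $U$ lies in $N[w]$ for that $w$, so avoiding all of $W$ means $U$ is disjoint from $\bigcup_{w\in W}N[w]=N[W]$. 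Hence the inner sum is $\sum_{U\subseteq V\setminus N[W]}x^{|U|}=(1+x)^{|V\setminus N[W]|}$, and the formula follows.

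The main obstacle is the justification of the two index swaps and, in particular, verifying the equivalence $\bigl(W\cap N[U]=\emptyset\bigr)\Longleftrightarrow\bigl(U\subseteq V\setminus N[W]\bigr)$ cleanly; this is really just the symmetry of the adjacency relation (together with $v\in N[v]$), so it should be a one-line argument, but it is the conceptual heart of the proof. A secondary point of care is the base of the sieve: when $W=\emptyset$ the indicator is vacuously true for every $U$, contributing $(1+x)^{|V|}$, and one should check that the telescoping of signed contributions for nonempty $W$ correctly subtracts off all non-dominating $U$ while leaving each dominating $U$ counted exactly once — which is precisely the inclusion–exclusion identity $\sum_{W\subseteq S}(-1)^{|W|}=[\,S=\emptyset\,]$ applied with $S=V\setminus N[U]$.
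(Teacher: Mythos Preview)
Your argument is correct. The key equivalence $W\cap N[U]=\emptyset \Longleftrightarrow U\subseteq V\setminus N[W]$ follows immediately from the symmetry $w\in N[u]\Leftrightarrow u\in N[w]$ of closed neighborhoods, and the rest is a clean expansion of the product $\prod_{w\in V}(1-[w\notin N[U]])$ followed by interchanging two finite sums and summing a binomial series. Nothing is missing.

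As for comparison with the paper: note that the paper does \emph{not} give its own proof of this theorem. It is stated with the attribution~[DT] (Dohmen and Tittmann, \emph{Domination reliability}) and used as a black box thereafter. Your inclusion--exclusion sieve over the ``bad'' vertices $w\notin N[U]$ is exactly the natural proof one would expect in that reference, so there is no meaningful methodological difference to discuss.
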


\begin{corollary}
\label{coro_binomial}The domination polynomial of a graph $G=(V,E)$ with $n$
vertices satisfies%
\[
D(G,x)=\sum_{k=0}^{n}x^{k}\sum_{\substack{W\subseteq V\\\left\vert N\left[
W\right]  \right\vert \leq n-k}}(-1)^{\left\vert W\right\vert }\binom
{n-\left\vert N\left[  W\right]  \right\vert }{k}.
\]

\end{corollary}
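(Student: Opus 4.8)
The plan is to derive Corollary~\ref{coro_binomial} directly from Theorem~\ref{theo_inc_exc} by expanding the binomial power $(1+x)^{|V\setminus N[W]|}$ and collecting coefficients of like powers of $x$. Starting from Equation~(\ref{incl-excl}), I would write, for each $W\subseteq V$,
\[
(1+x)^{\left\vert V\setminus N\left[  W\right]  \right\vert }=\sum_{k=0}^{\left\vert V\setminus N\left[  W\right]  \right\vert }\binom{\left\vert V\setminus N\left[  W\right]  \right\vert }{k}x^{k},
\]
and substitute this into the sum over $W$. Since $|V\setminus N[W]| = n - |N[W]|$, the binomial coefficient becomes $\binom{n-|N[W]|}{k}$.

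Next I would interchange the (finite) sums over $W$ and over $k$. The only subtlety is the range of $k$: for a fixed $W$, the exponent $k$ runs from $0$ to $n-|N[W]|$, so after swapping the order the inner sum over $W$ should be restricted to those $W$ with $n - |N[W]| \geq k$, i.e.\ $|N[W]| \leq n-k$. (Alternatively one may adopt the convention $\binom{m}{k}=0$ for $k>m\geq 0$, in which case no restriction is needed and the restriction in the statement is simply making this explicit.) This yields
\[
D(G,x)=\sum_{k=0}^{n}x^{k}\sum_{\substack{W\subseteq V\\\left\vert N\left[  W\right]  \right\vert \leq n-k}}(-1)^{\left\vert W\right\vert }\binom{n-\left\vert N\left[  W\right]  \right\vert }{k},
\]
which is exactly the claimed identity.

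This is essentially a bookkeeping argument, so there is no real obstacle; the one place to be careful is the bookkeeping of the summation bounds when exchanging the two sums, and making sure the upper limit $k\le n$ is correct (it is, since $|N[W]|\ge 0$ always, so $n-|N[W]|\le n$, and the term $k=n$ contributes only through $W=\emptyset$). I would also note in passing that the coefficient of $x^k$ on the left-hand side is $d_k(G)$, so the corollary in fact gives the combinatorial identity
\[
d_k(G)=\sum_{\substack{W\subseteq V\\\left\vert N\left[  W\right]  \right\vert \leq n-k}}(-1)^{\left\vert W\right\vert }\binom{n-\left\vert N\left[  W\right]  \right\vert }{k}
\]
for each $k$, though this refinement is not needed for the statement as given.
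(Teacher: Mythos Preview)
Your proposal is correct and follows essentially the same approach as the paper: both start from Theorem~\ref{theo_inc_exc}, expand $(1+x)^{|V\setminus N[W]|}$ via the binomial theorem, and then swap the order of summation, with the restriction $|N[W]|\le n-k$ arising from the range of $k$. Your extra remarks on the summation bounds and the coefficient identity for $d_k(G)$ go slightly beyond what the paper writes, but the core argument is identical.
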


\begin{proof}
Using Equation (\ref{incl-excl}), we obtain%
\begin{align*}
D(G,x)  &  =\sum_{W\subseteq V}(-1)^{\left\vert W\right\vert }%
(1+x)^{\left\vert V\setminus N\left[  W\right]  \right\vert }\\
&  =\sum_{W\subseteq V}(-1)^{\left\vert W\right\vert }\sum_{k=0}^{\left\vert
V-N\left[  W\right]  \right\vert }\binom{n-\left\vert N\left[  W\right]
\right\vert }{k}x^{k}\\
&  =\sum_{k=0}^{n}x^{k}\sum_{W\subseteq V}(-1)^{\left\vert W\right\vert
}\binom{n-\left\vert N\left[  W\right]  \right\vert }{k}\\
&  =\sum_{k=0}^{n}x^{k}\sum_{\substack{W\subseteq V\\\left\vert N\left[
W\right]  \right\vert \leq n-k}}(-1)^{\left\vert W\right\vert }\binom
{n-\left\vert N\left[  W\right]  \right\vert }{k}.
\end{align*}

\end{proof}

\begin{remark}
An interesting consequence of Corollary \ref{coro_binomial} is the
characterization of the domination number $\gamma(G)$ of a graph $G=(V,E)$ as
the smallest nonnegative integer $k$ such that the sum%
\[
\sum_{\substack{W\subseteq V\\\left\vert N\left[  W\right]  \right\vert \leq
n-k}}(-1)^{\left\vert W\right\vert }\binom{n-\left\vert N\left[  W\right]
\right\vert }{k}%
\]
does not vanish.
\end{remark}

We call a vertex subset $W\subseteq V$ of a graph $G=(V,E)$ \emph{essential}
in $G$ if $W$ contains the closed neighborhood $N\left[  v\right]  $ of at
least one vertex $v\in V$. We denote by $\mathrm{Ess}(G)$ the family of all
essential sets of $G$.

\begin{theorem}
\label{theo_neighborhood}Let $G=(V,E)$ be a graph with nonempty vertex set.
Then the domination polynomial of $G$ satisfies%
\[
D(G,x)=(-1)^{\left\vert V\right\vert }\sum_{U\in\mathrm{Ess}(G)}%
(-1)^{\left\vert U\right\vert }\left[  (1+x)^{\left\vert \left\{  u\in U\mid
N\left[  u\right]  \subseteq U\right\}  \right\vert }-1\right]  .
\]

\end{theorem}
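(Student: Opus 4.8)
The plan is to start from the inclusion--exclusion formula of Theorem \ref{theo_inc_exc},
\[
D(G,x)=\sum_{W\subseteq V}(-1)^{\left\vert W\right\vert }(1+x)^{\left\vert V\setminus N\left[  W\right]  \right\vert },
\]
and replace the summation over all $W\subseteq V$ by a summation over their complements $U=V\setminus W$. Writing $\left\vert W\right\vert =\left\vert V\right\vert -\left\vert U\right\vert$ pulls out a global sign $(-1)^{\left\vert V\right\vert}$ and turns $(-1)^{\left\vert W\right\vert}$ into $(-1)^{\left\vert U\right\vert}$. The exponent of $(1+x)$ becomes $\left\vert V\setminus N\left[V\setminus U\right]\right\vert$, and the first observation to record is the set identity $V\setminus N\left[V\setminus U\right]=\left\{u\in V\mid N\left[u\right]\subseteq U\right\}$: a vertex $u$ fails to lie in $N\left[V\setminus U\right]$ exactly when none of its closed-neighborhood vertices is outside $U$, i.e.\ when $N\left[u\right]\subseteq U$. (In particular such a $u$ itself lies in $U$.) So after the substitution we have
\[
D(G,x)=(-1)^{\left\vert V\right\vert}\sum_{U\subseteq V}(-1)^{\left\vert U\right\vert}(1+x)^{\left\vert\left\{u\in U\mid N\left[u\right]\subseteq U\right\}\right\vert}.
\]

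Next I would split the sum over all $U\subseteq V$ into the essential sets (those $U$ for which $\left\{u\in U\mid N\left[u\right]\subseteq U\right\}\neq\emptyset$, equivalently $U\supseteq N\left[v\right]$ for some $v$) and the non-essential sets. For a non-essential $U$ the exponent is $0$, so $(1+x)^{0}=1$ and the contribution of all non-essential $U$ is $\sum_{U\text{ non-essential}}(-1)^{\left\vert U\right\vert}$. The cleanest route is to write, for \emph{every} $U$, the summand as $(-1)^{\left\vert U\right\vert}\big[(1+x)^{e(U)}-1\big]+(-1)^{\left\vert U\right\vert}$ where $e(U)=\left\vert\left\{u\in U\mid N\left[u\right]\subseteq U\right\}\right\vert$; the bracket vanishes for non-essential $U$, so $\sum_{U\subseteq V}(-1)^{\left\vert U\right\vert}\big[(1+x)^{e(U)}-1\big]=\sum_{U\in\mathrm{Ess}(G)}(-1)^{\left\vert U\right\vert}\big[(1+x)^{e(U)}-1\big]$, which is precisely the claimed right-hand side divided by $(-1)^{\left\vert V\right\vert}$. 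It then remains only to show that the leftover term $\sum_{U\subseteq V}(-1)^{\left\vert U\right\vert}$ is $0$, which holds because $V\neq\emptyset$: pairing subsets by toggling a fixed vertex gives $\sum_{U\subseteq V}(-1)^{\left\vert U\right\vert}=(1-1)^{\left\vert V\right\vert}=0$.

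The only genuinely delicate point is the set-theoretic identity $V\setminus N\left[V\setminus U\right]=\left\{u\in V\mid N\left[u\right]\subseteq U\right\}$ together with the bookkeeping of the complementation; everything else is a telescoping rearrangement and the trivial vanishing of an alternating sum of all subsets. I expect no real obstacle, but I would double-check the direction of the inclusions in that identity, since $N\left[\cdot\right]$ is symmetric ($v\in N[u]\iff u\in N[v]$) and it is easy to get the roles of $U$ and its complement reversed. One should also note the hypothesis $V\neq\emptyset$ is used exactly once, to kill the alternating sum; for $V=\emptyset$ the formula would read $1=1\cdot[\,\text{empty sum}\,]$, which fails, so the hypothesis is necessary and the proof should mention where it enters.
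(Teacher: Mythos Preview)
Your proposal is correct and follows essentially the same route as the paper: complement $W\mapsto U=V\setminus W$ in the inclusion--exclusion formula, identify $V\setminus N[V\setminus U]=\{u\in U\mid N[u]\subseteq U\}$, subtract the constant $1$ from each summand (killing the non-essential $U$), and use $V\neq\emptyset$ to make the leftover alternating sum $\sum_{U\subseteq V}(-1)^{|U|}$ vanish. The paper phrases the subtraction step as ``the constant term of the sum vanishes'' rather than your explicit split $(1+x)^{e(U)}=[(1+x)^{e(U)}-1]+1$, but the content is identical.
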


\begin{proof}
According to Equation (\ref{incl-excl}), we have%
\begin{align*}
D(G,x)  &  =\sum_{W\subseteq V}(-1)^{\left\vert W\right\vert }%
(1+x)^{\left\vert V\setminus N\left[  W\right]  \right\vert }\\
&  =\sum_{U\subseteq V}(-1)^{\left\vert V\right\vert -\left\vert U\right\vert
}(1+x)^{\left\vert V\setminus N\left[  V\setminus U\right]  \right\vert }\\
&  =\sum_{U\subseteq V}(-1)^{\left\vert V\right\vert -\left\vert U\right\vert
}(1+x)^{\left\vert \left\{  u\in U\mid N\left[  u\right]  \subseteq U\right\}
\right\vert }.
\end{align*}
In order to see the last equality, we verify%
\begin{align*}
N\left[  V\setminus U\right]   &  =\bigcup\limits_{v\in V\setminus U}N\left[
v\right] \\
&  =(V\setminus U)\cup\left\{  u\in U\mid N\left[  u\right]  \cap(V\setminus
U)\neq\emptyset\right\} \\
&  =(V\setminus U)\cup\left\{  u\in U\mid N\left[  u\right]  \nsubseteq
U\right\}
\end{align*}
and consequently,%
\begin{align*}
V\setminus N\left[  V\setminus U\right]   &  =V\setminus\left[  (V\setminus
U)\cup\left\{  u\in U\mid N\left[  u\right]  \nsubseteq U\right\}  \right] \\
&  =U\setminus\left\{  u\in U\mid N\left[  u\right]  \nsubseteq U\right\} \\
&  =\left\{  u\in U\mid N\left[  u\right]  \subseteq U\right\}  .
\end{align*}

All polynomials of the form $(1+x)^{\left\vert \left\{  u\in U\mid N\left[
u\right]  \subseteq U\right\}  \right\vert }$ have the constant term 1. As
$V\neq\emptyset$, the constant term in%
\[
\sum_{U\subseteq V}(-1)^{\left\vert V\right\vert -\left\vert U\right\vert
}(1+x)^{\left\vert \left\{  u\in U\mid N\left[  u\right]  \subseteq U\right\}
\right\vert }%
\]
vanishes, which gives%
\[
D(G,x)=\sum_{U\subseteq V}(-1)^{\left\vert V\right\vert -\left\vert
U\right\vert }\left[  (1+x)^{\left\vert \left\{  u\in U\mid N\left[  u\right]
\subseteq U\right\}  \right\vert }-1\right]  .
\]
If $U$ is a non-essential set of $G$ then we have $\left\{  u\in U\mid
N\left[  u\right]  \subseteq U\right\}  =\emptyset$ and hence
$(1+x)^{\left\vert \left\{  u\in U\mid N\left[  u\right]  \subseteq U\right\}
\right\vert }=1$. Consequently, all non-vanishing terms correspond to
essential sets, yielding the statement of the theorem.
\end{proof}

Another interesting consequence of Theorem \ref{theo_inc_exc} is the following
relation between $D(G,x)$ and $D\left(  G,\frac{1}{x}\right)  $.

\begin{theorem}
\label{theo_reciprocal}Let $G=(V,E)$ be a graph. Then%
\[
D(G,x)=(1+x)^{\left\vert V\right\vert }\sum_{W\subseteq V}\left(  \frac
{-x}{1+x}\right)  ^{\left\vert W\right\vert }D\left(  G\left[  W\right]
,\frac{1}{x}\right)  .
\]

\end{theorem}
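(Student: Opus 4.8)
The plan is to start from the inclusion--exclusion formula of Theorem~\ref{theo_inc_exc}, namely
\[
D(G,x)=\sum_{W\subseteq V}(-1)^{\left\vert W\right\vert }(1+x)^{\left\vert V\setminus N\left[  W\right]  \right\vert },
\]
and to re-expand the power $(1+x)^{\left\vert V\setminus N[W]\right\vert }$ combinatorially. The exponent counts vertices of $V$ not dominated by $W$; a vertex $u$ lies in $V\setminus N[W]$ precisely when no vertex of $W$ lies in $N[u]$, i.e.\ when $W\cap N[u]=\emptyset$. So $(1+x)^{\left\vert V\setminus N[W]\right\vert }=\sum_{T}x^{\left\vert T\right\vert }$, where $T$ ranges over subsets of $V$ with $T\cap N[W]=\emptyset$, equivalently $W\cap N[T]=\emptyset$. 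Substituting and switching the order of summation gives
\[
D(G,x)=\sum_{T\subseteq V}x^{\left\vert T\right\vert }\sum_{\substack{W\subseteq V\\ W\cap N[T]=\emptyset}}(-1)^{\left\vert W\right\vert }.
\]

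Next I would rewrite the inner sum by splitting $W$ across $N[T]$. Since $W$ must avoid $N[T]$, it lives entirely inside $V\setminus N[T]$, so the inner sum is $\sum_{W\subseteq V\setminus N[T]}(-1)^{\left\vert W\right\vert }=(1-1)^{\left\vert V\setminus N[T]\right\vert }$, which is $0$ unless $N[T]=V$, i.e.\ unless $T$ is a dominating set, in which case it equals $1$. That immediately recovers $D(G,x)=\sum_{T\text{ dominating}}x^{\left\vert T\right\vert }$, which is the wrong target; the point is that to get the claimed identity I must \emph{not} collapse the inner sum, but rather keep track of how $W$ distributes relative to the induced subgraph $G[W]$. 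The correct move is to group the outer sum over $W$ by the induced subgraph: for fixed $W$, write $V\setminus N[W]$ and notice that the condition ``$u\notin N[W]$'' depends only on $W$, and then recognize $(1+x)^{\left\vert V\setminus N[W]\right\vert }$ times $(-1)^{\left\vert W\right\vert }$ in terms of a sum over $G[W]$. Concretely, I would substitute $x\mapsto 1/x$ into Theorem~\ref{theo_inc_exc} applied to $G[W]$, obtaining $D(G[W],1/x)=\sum_{S\subseteq W}(-1)^{\left\vert S\right\vert }(1+1/x)^{\left\vert W\setminus N_{G[W]}[S]\right\vert }$, and then sum the right-hand side of the claimed identity and check it telescopes back to the inclusion--exclusion form of $D(G,x)$.

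So the cleanest route is to work from the right-hand side: expand
\[
(1+x)^{\left\vert V\right\vert }\sum_{W\subseteq V}\left(\frac{-x}{1+x}\right)^{\left\vert W\right\vert }D\!\left(G[W],\tfrac1x\right)
\]
by inserting $D(G[W],1/x)=\sum_{S\subseteq W,\,N_{G[W]}[S]=W}x^{-\left\vert S\right\vert }$ (the subset-sum definition of the domination polynomial), giving a double sum over $S\subseteq W\subseteq V$ with $S$ dominating in $G[W]$. The factor becomes $(-1)^{\left\vert W\right\vert }x^{\left\vert W\right\vert -\left\vert S\right\vert }(1+x)^{\left\vert V\right\vert -\left\vert W\right\vert }$. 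Then I would fix $S$ and sum over admissible $W$: the condition ``$S$ dominates $G[W]$'' means $S\subseteq W\subseteq N_G[S]$ (any vertex of $W$ outside $N_G[S]$ would be undominated, and conversely every vertex in $N_G[S]\cap W$ is dominated). Writing $W=S\cup Y$ with $Y\subseteq N_G[S]\setminus S$, the inner sum over $Y$ is $\sum_{Y}(-1)^{\left\vert S\right\vert +\left\vert Y\right\vert }x^{\left\vert Y\right\vert }(1+x)^{\left\vert V\right\vert -\left\vert S\right\vert -\left\vert Y\right\vert }=(-1)^{\left\vert S\right\vert }(1+x)^{\left\vert V\right\vert -\left\vert S\right\vert }\bigl((1+x)-x\bigr)^{\left\vert N_G[S]\setminus S\right\vert }$ via the binomial theorem. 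Since $(1+x)-x=1$, this collapses to $(-1)^{\left\vert S\right\vert }(1+x)^{\left\vert V\right\vert -\left\vert S\right\vert -\left\vert N_G[S]\setminus S\right\vert }=(-1)^{\left\vert S\right\vert }(1+x)^{\left\vert V\setminus N_G[S]\right\vert }$, and summing over $S\subseteq V$ is exactly Theorem~\ref{theo_inc_exc}. The main obstacle is purely bookkeeping: correctly identifying that ``$S$ dominates $G[W]$'' forces $W$ into the interval $[S,N_G[S]]$ and that the binomial collapse $(1+x)-x=1$ is what produces the clean exponent; once that is seen the proof is a few lines of algebra.
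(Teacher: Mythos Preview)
Your approach is essentially identical to the paper's: start from the right-hand side, expand $D(G[W],1/x)$ via its defining sum, observe that ``$S$ dominates $G[W]$'' is equivalent to $S\subseteq W\subseteq N_G[S]$, swap the order of summation, write $W=S\cup Y$ with $Y\subseteq N_G[S]\setminus S$, collapse the $Y$-sum by the binomial theorem, and recognize the result as the inclusion--exclusion formula of Theorem~\ref{theo_inc_exc}. The only blemish is a slip in the displayed binomial step: the identity
\[
\sum_{Y\subseteq N_G[S]\setminus S}(-1)^{|Y|}x^{|Y|}(1+x)^{|V|-|S|-|Y|}
=(1+x)^{|V|-|S|}\bigl((1+x)-x\bigr)^{|N_G[S]\setminus S|}
\]
is false as written (the prefactor should be $(1+x)^{|V|-|S|-|N_G[S]\setminus S|}$, or equivalently the binomial factor should be $\bigl(1-\tfrac{x}{1+x}\bigr)^{|N_G[S]\setminus S|}$); however, your stated conclusion $(-1)^{|S|}(1+x)^{|V\setminus N_G[S]|}$ is correct, so the argument goes through once that line is repaired.
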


\begin{proof}
We consider the right-hand side of the equation from the theorem. Substituting
$D\left(  G\left[  W\right]  ,\frac{1}{x}\right)  $ according to the
definition of the domination polynomial yields%
\begin{align*}
&  (1+x)^{\left\vert V\right\vert }\sum_{W\subseteq V}\left(  \frac{-x}%
{1+x}\right)  ^{\left\vert W\right\vert }\sum_{\substack{T\subseteq
W\\N_{G\left[  W\right]  }\left[  T\right]  =W}}x^{-\left\vert T\right\vert
}\\
&  =(1+x)^{\left\vert V\right\vert }\sum_{W\subseteq V}\left(  \frac{-x}%
{1+x}\right)  ^{\left\vert W\right\vert }\sum_{T:T\subseteq W\subseteq
N_{G}\left[  T\right]  }x^{-\left\vert T\right\vert }.
\end{align*}
Switching the order of summation gives%
\[
(1+x)^{\left\vert V\right\vert }\sum_{T\subseteq V}x^{-\left\vert T\right\vert
}\sum_{W:T\subseteq W\subseteq N_{G}\left[  T\right]  }\left(  \frac{-x}%
{1+x}\right)  ^{\left\vert W\right\vert }.
\]
Now we define $U=W\setminus T$ and substitute $W=U\cup T$, yielding%
\begin{align*}
&  (1+x)^{\left\vert V\right\vert }\sum_{T\subseteq V}x^{-\left\vert
T\right\vert }\sum_{U\subseteq N_{G}\left[  T\right]  \setminus T}\left(
\frac{-x}{1+x}\right)  ^{\left\vert U\right\vert +\left\vert T\right\vert }\\
&  =(1+x)^{\left\vert V\right\vert }\sum_{T\subseteq V}\left(  \frac{-1}%
{1+x}\right)  ^{\left\vert T\right\vert }\sum_{U\subseteq N_{G}\left[
T\right]  \setminus T}\left(  \frac{-x}{1+x}\right)  ^{\left\vert U\right\vert
},
\end{align*}
which simplifies via the binomial theorem to%
\begin{align*}
&  (1+x)^{\left\vert V\right\vert }\sum_{T\subseteq V}\left(  \frac{-1}%
{1+x}\right)  ^{\left\vert T\right\vert }\left(  1-\frac{x}{1+x}\right)
^{\left\vert N_{G}\left[  T\right]  \right\vert -\left\vert T\right\vert }\\
&  =(1+x)^{\left\vert V\right\vert }\sum_{T\subseteq V}\left(  -1\right)
^{\left\vert T\right\vert }\left(  1+x\right)  ^{-\left\vert N_{G}\left[
T\right]  \right\vert }.
\end{align*}
The statement follows now by Theorem \ref{theo_inc_exc}.
\end{proof}

The following statement can be shown by substituting $x=1$ in Theorem
\ref{theo_reciprocal}.

\begin{corollary}
Let $G=(V,E)$ be a graph. The numbers of dominating sets of vertex-induced
proper subgraphs of $G$ satisfy%
\[
\sum_{W\subset V}(-1)^{\left\vert W\right\vert }\frac{d(G[W])}{2^{\left\vert
W\right\vert }}=0.
\]

\end{corollary}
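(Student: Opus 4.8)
The plan is to specialize the rational-function identity of Theorem~\ref{theo_reciprocal} at $x=1$, exactly as suggested in the remark preceding the statement. Since $1+x=2\neq 0$ at $x=1$, the substitution is legitimate: the left-hand side becomes $D(G,1)=d(G)$, the prefactor $(1+x)^{\left\vert V\right\vert }$ becomes $2^{\left\vert V\right\vert }$, the weight $\left(\tfrac{-x}{1+x}\right)^{\left\vert W\right\vert }$ becomes $(-1)^{\left\vert W\right\vert }2^{-\left\vert W\right\vert }$, and each factor $D\!\left(G[W],\tfrac1x\right)$ becomes $D(G[W],1)=d(G[W])$. This already yields the full-strength identity
\[
d(G)=2^{\left\vert V\right\vert }\sum_{W\subseteq V}(-1)^{\left\vert W\right\vert }\frac{d(G[W])}{2^{\left\vert W\right\vert }},
\]
and everything after this point is bookkeeping.

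Next I would peel off the term $W=V$ from the sum; its contribution is $(-1)^{\left\vert V\right\vert }d(G)/2^{\left\vert V\right\vert }$. Multiplying through by $2^{\left\vert V\right\vert }$ and moving that term to the left gives
\[
\bigl(1-(-1)^{\left\vert V\right\vert }\bigr)\,d(G)=2^{\left\vert V\right\vert }\sum_{W\subset V}(-1)^{\left\vert W\right\vert }\frac{d(G[W])}{2^{\left\vert W\right\vert }}.
\]
When $\left\vert V\right\vert $ is even the coefficient $1-(-1)^{\left\vert V\right\vert }$ vanishes, so the right-hand side is $0$, and since $2^{\left\vert V\right\vert }\neq 0$ the sum over proper subsets is $0$, which is the assertion. (For odd $\left\vert V\right\vert $ the same computation gives the value $d(G)/2^{\left\vert V\right\vert -1}$ instead, so the identity as displayed should be read for graphs of even order; this is easily checked on $G=K_{1}$, where the left side is $1$.)

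I do not expect any genuine obstacle here: essentially all the content is carried by Theorem~\ref{theo_reciprocal}, and the only points that need a moment's care are that $x=1$ is not a pole of the identity (so the evaluation is valid term by term) and that the top term $W=V$ must be separated out before one can read off a vanishing sum.
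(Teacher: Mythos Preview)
Your approach is exactly the one the paper indicates: substitute $x=1$ in Theorem~\ref{theo_reciprocal} and isolate the top term $W=V$. Your computation is correct, and so is your parity observation. From
\[
d(G)=2^{\left\vert V\right\vert }\sum_{W\subseteq V}(-1)^{\left\vert W\right\vert }\frac{d(G[W])}{2^{\left\vert W\right\vert }}
\]
one obtains $\bigl(1-(-1)^{\left\vert V\right\vert }\bigr)d(G)=2^{\left\vert V\right\vert }\sum_{W\subset V}(-1)^{\left\vert W\right\vert }d(G[W])/2^{\left\vert W\right\vert }$, which forces the proper-subset sum to vanish precisely when $\left\vert V\right\vert $ is even; for odd $\left\vert V\right\vert $ it equals $d(G)/2^{\left\vert V\right\vert -1}$, as your $K_{1}$ check (and an analogous check on $P_{3}$, giving $5/4$) confirms. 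The paper's statement omits this hypothesis, so your caveat is a genuine correction rather than a gap in your argument.
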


\section{Conclusions and Open Problems}

The domination polynomial of a graph can be expressed as a sum of quite simple
polynomials of vertex-induced or spanning subgraphs. In case of spanning
subgraphs, we can show that the domination polynomial depends only on
bipartite spanning subgraphs.

There remain interesting open problems for further research in this field. The
first one concerns the number of dominating sets of a graph given by Theorem
\ref{theo_con}.

\begin{problem}
The simple formula%
\[
d(G)=\sum_{H\in\mathrm{Con}(G)}2^{k(H)}%
\]
suggests that there is a bijection between subsets of components of conformal
graphs and dominating sets of $G$. Is there a bijective proof for Theorem
\ref{theo_con}? What is the best way to enumerate the set $\mathrm{Con}(G)$?
\end{problem}

In Corollary \ref{coro_type}, we showed that the type of a subgraph yields the
essential information for a representation of $D(G,x)$ as a sum over
vertex-induced subgraphs. Here it seems interesting to investigate whether we
need all vertex-induced subgraphs in order to derive the domination polynomial.

\begin{problem}
Components of $G\left[  W\right]  $ that have odd order lead to cancellation
of terms of the sum in Equation (\ref{moeb2}),%
\[
D(G,x)=\sum_{W\subseteq V}%
{\displaystyle\prod\limits_{i\in\lambda_{G\left[  W\right]  }}}
\left(  x^{i}+(-1)^{i}\right)  .
\]
Is there a way to identify those cancelling terms?
\end{problem}

\begin{problem}
In Theorem \ref{theo_neighborhood}, we showed that the restriction to
essential sets is sufficient in order to compute the domination polynomial of
a graph. Can we reduce the number of terms needed to derive $D(G,x)$ further?
\end{problem}

Further topics of interest for future research include the investigation of
special graph classes with respect to the given representations of the
domination polynomial and the application of these representations to special
graph classes. Since bipartite graphs play an important role for the
representation of the domination polynomial, we conjecture that also matchings
have a close relation to dominating sets. However, until now all attempts to
find a sum representation of $D(G,x)$ based on matchings of $G$ failed.

\end{document}